\newtheorem{theorem}{Theorem}[section]
\newtheorem{lemma}[theorem]{Lemma}
\newtheorem{cor}[theorem]{Corollary}
\theoremstyle{definition}
\theoremstyle{remark}
\newtheorem{remark}[theorem]{\bf{Remark}}
\numberwithin{equation}{section}
\begin{document}

\title [Sharper bounds for the numerical radius of $ \lowercase{n}\times \lowercase{n}$ operator matrices]    
{ {Sharper bounds for the numerical radius  of \\$ \lowercase{n}\times \lowercase{n}$ operator matrices }}

\author[P. Bhunia]{Pintu Bhunia}

\address {{Department of Mathematics, Indian Institute of Science, Bengaluru 560012, Karnataka, India}}
\email {\bf{pintubhunia5206@gmail.com; pintubhunia@iisc.ac.in }}


\thanks{The author would like to sincerely acknowledge Professor K. Paul for his valuable comments.
 The author also would like to thank SERB, Govt. of India for the financial support in the form of National Post Doctoral Fellowship (N-PDF, File No. PDF/2022/000325) under the mentorship of Professor Apoorva Khare}

\thanks{}

\subjclass[2020]{47A12, 47A30, 15A60, 47A63}
\keywords {Numerical radius, Operator norm, Operator matrix, Inequality}

\date{}
\maketitle

\begin{abstract}
Let $A=\begin{bmatrix}
	A_{ij}
\end{bmatrix}$ be an $n\times n$ operator matrix, where each $A_{ij}$ is a bounded linear operator on a complex Hilbert space. Among other numerical radius bounds, we show that $w(A)\leq w(\hat{A})$, where $\hat{A}=\begin{bmatrix}
\hat{a}_{ij}
\end{bmatrix}$ is an  $n\times n$ complex matrix, with  
$$\hat{a}_{ij}= \begin{cases}
	w(A_{ii}) \text{ when $i=j$,} \\
	\left \| | A_{ij}|+ | A_{ji}^*| \right\|^{1/2} \left\|  | A_{ji}|+ | A_{ij}^*| \right\|^{1/2} \text{ when $i<j$,} \\
	0 \text{ when $i>j$} .
\end{cases}$$
This is a considerable improvement of the existing bound $w(A)\leq w(\tilde{A})$, where $\tilde{A}=\begin{bmatrix}
	\tilde{a}_{ij}
\end{bmatrix}$ is an  $n\times n$ complex matrix, with  
$$\tilde{a}_{ij}= \begin{cases}
	w(A_{ii}) \text{ when $i=j$}, \\
	\|A_{ij}\| \text{ when $i\neq j$}.
\end{cases}$$
Further, applying the bounds, we develop the numerical radius bounds for the product of two operators and the commutator of operators. Also, we develop an upper bound for the spectral radius of the sum of the product of $n$ pairs of operators, which improve the existing bound.

\end{abstract}

\section{\textbf{Introduction}}
\noindent Let $\mathcal{H}$ be a complex Hilbert space with inner product $\langle.,.\rangle$ and the corresponding norm $\|\cdot\|,$ and let $\mathcal{B}(\mathcal{H})$ denote the $C^*$-algebra of all bounded linear operators on $\mathcal{H}.$ If $\mathcal{H}$ is an $n$-dimensional Hilbert space, then $\mathcal{B}(\mathcal{H})$ is identified with $\mathcal{M}_n(\mathbb{C}),$ the set of all $n\times n$ complex matrices.
For $A\in \mathcal{B}(\mathcal{H})$, let $|A|=(A^*A)^{1/2}$ and $|A^*|=(AA^*)^{1/2},$ where $A^*$ is the adjoint of $A.$ Let $ r(A), \|A\|$ and $w(A)$ denote the spectral radius, the operator norm and the numerical radius of $A,$ respectively. The numerical radius of $A$ is defined as $$w(A)=\sup\left\{ |\langle Ax,x\rangle|: x\in \mathcal{H}, \|x\|=1  \right\}.$$
It is well known that the numerical radius, $w(\cdot): \mathcal{B}(\mathcal{H})\to \mathbb{R}$ defines a norm on $\mathcal{B}(\mathcal{H})$ and  satisfies the inequalities $\frac12 \|A\|\leq w(A)\leq \|A\|,$ for every $A\in \mathcal{B}(\mathcal{H}).$
Observe  that $w(A)=\|A\|$ if $A$ is normal and $w(A)=\frac12 \|A\|$ if $A^2=0.$ Analogous to the operator norm, the numerical radius satisfies the power inequality, i.e., $w(A^n) \leq w(A)^n$ for every positive integer $n.$ For further readings on the numerical radius, we refer to books \cite{book, book2} and for recent improvements of the numerical radius inequalities, the interested readers can see the articles \cite{Bhunia_ASM_2023,Bhu23_2,Bhu23,Bhunia_LAMA_2022,Bhunia_LAA_2021,Bhunia_AM_2021,Bhunia_RIM_2021,Bhunia_BSM_2021,Bhunia_LAMA_2021,Kittaneh_STD_2005,Kittaneh_2003}.

\noindent Let $A_{ij}\in \mathcal{B}(\mathcal{H})$ for all $i,j=1,2,\ldots,n.$ Then the $n\times n$ operator matrix, $\begin{bmatrix}
		A_{11}& A_{12}& \ldots& A_{1n}\\
	A_{21}& A_{22}& \ldots& A_{2n}\\
	\vdots&\vdots&  & \vdots\\
	A_{n1}& A_{n2}& \ldots& A_{nn}\\
\end{bmatrix}\in \mathcal{B}\left(\oplus_{i=1}^n\mathcal{H}\right).$ The operator matrices, a useful tool in studying Hilbert space operators, have been studied  over the years, see \cite{Halmos}. In 1995, Hou and Du \cite{Hou_1995} have proved that 
\begin{eqnarray}\label{p7}
	w\left(\begin{bmatrix}
		A_{11}& A_{12}& \ldots& A_{1n}\\
		A_{21}& A_{22}& \ldots& A_{2n}\\
		\vdots&\vdots&  & \vdots\\
		A_{n1}& A_{n2}& \ldots& A_{nn} 
	\end{bmatrix}  \right) \leq 	w\left(\begin{bmatrix}
	\|A_{11}\|& \|A_{12}\|& \ldots& \|A_{1n}\|\\
	\|A_{21}\|& \|A_{22}\|& \ldots& \|A_{2n}\|\\
	\vdots&\vdots&  & \vdots\\
	\|A_{n1}\|& \|A_{n2}\|& \ldots& \|A_{nn}\| 
\end{bmatrix}  \right).
\end{eqnarray}
Further, in 2015, Abu-Omar and Kittaneh \cite{Abu_LAA_2015} have developed a considerable refinement of the bound in \eqref{p7}, that is 
\begin{eqnarray}\label{p8}
	w\left(\begin{bmatrix}
		A_{11}& A_{12}& \ldots& A_{1n}\\
		A_{21}& A_{22}& \ldots& A_{2n}\\
		\vdots&\vdots&  & \vdots\\
		A_{n1}& A_{n2}& \ldots& A_{nn} 
	\end{bmatrix}  \right) \leq 	w\left(\begin{bmatrix}
		w(A_{11})& \|A_{12}\|& \ldots& \|A_{1n}\|\\
		\|A_{21}\|& w(A_{22})& \ldots& \|A_{2n}\|\\
		\vdots&\vdots&  & \vdots\\
		\|A_{n1}\|& \|A_{n2}\|& \ldots& w(A_{nn}) 
	\end{bmatrix}  \right).
\end{eqnarray}

 In this article, we develop a bound for the numerical radius of $n\times n$ operator matrices, which improve the bound in \eqref{p8}. Applying the bound, we obtain some bounds for the numerical radius of the product of two operators and the commutator of operators. Other results are also derived. Finally, we obtain an upper bound for the spectral radius of the sum of the product of $n$ pairs of operators, which improve the existing bound.

\section{\textbf{Main Results}}
We present an upper bound for the numerical radius of $n\times n$ operator matrices, which refines the bound in \eqref{p8}. For this purpose, we begin with the following well known lemma.

\begin{lemma}\label{lem1} \cite{Kit88}
	Let $A\in \mathcal{B}(\mathcal{H})$, and  let $x,y\in \mathcal{H}.$
	If $f,g :[0,\infty)\to [0,\infty)$ are continuous functions, satisfy $f(\lambda)g(\lambda)=\lambda$, for all $\lambda \in [0,\infty)$, then
	$$ |\langle Ax,y\rangle| \leq \|f(|A|)x\| \|g(|A^*|)y\|.$$ 
\end{lemma}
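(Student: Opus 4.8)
The plan is to reduce the claimed mixed Cauchy--Schwarz inequality to the ordinary Cauchy--Schwarz inequality by exploiting the polar decomposition of $A$ together with the continuous functional calculus. I would write $A = U|A|$, where $U$ is the partial isometry with initial space $\overline{\mathrm{ran}}\,|A|$ and final space $\overline{\mathrm{ran}}\,A$. Since $f$ and $g$ are functions of the single positive operator $|A|$, they commute, and the hypothesis $f(\lambda)g(\lambda)=\lambda$ gives $|A| = f(|A|)\,g(|A|)$ via the functional calculus. Substituting this and moving $g(|A|)$ together with $U$ into the second slot of the inner product, I would write
$$\langle Ax,y\rangle = \langle U g(|A|) f(|A|) x, y\rangle = \langle f(|A|)x,\; g(|A|)U^* y\rangle,$$
and then apply Cauchy--Schwarz to obtain $|\langle Ax,y\rangle| \le \|f(|A|)x\|\,\|g(|A|)U^* y\|$.

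The remaining and most delicate step is to show $\|g(|A|)U^*y\| \le \|g(|A^*|)y\|$, which is where the passage from $|A|$ to $|A^*|$ takes place. The key intertwining identity is $|A^*| = U|A|U^*$: indeed $AA^* = U|A|^2U^*$, and using that $U^*U$ is the orthogonal projection onto $\overline{\mathrm{ran}}\,|A|$ (so that $U^*U\,|A| = |A|$) one checks $(U|A|U^*)^2 = AA^*$, whence $|A^*| = U|A|U^*$. Iterating gives $U|A|^k U^* = |A^*|^k$ for every $k\ge 1$, so $U\,p(|A|)\,U^* = p(|A^*|)$ for every polynomial $p$ with $p(0)=0$, and by the Stone--Weierstrass theorem the same identity persists for every continuous $\varphi$ with $\varphi(0)=0$.

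To finish, I would compute $\|g(|A|)U^*y\|^2 = \langle U\,g(|A|)^2\,U^* y, y\rangle$ and apply the previous paragraph to $h=g^2$. Splitting $h(\lambda)=h(0)+\big(h(\lambda)-h(0)\big)$ so that the second summand vanishes at $0$, I obtain $U h(|A|)U^* = h(|A^*|) - h(0)(I-UU^*)$. Since $h(0)=g(0)^2\ge 0$ and $I-UU^*$ is an orthogonal projection, the correction term is positive, so $U g(|A|)^2 U^* \le g(|A^*|)^2$ in the operator order; pairing against $y$ yields $\|g(|A|)U^*y\| \le \|g(|A^*|)y\|$, which closes the argument. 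I expect the main obstacle to be precisely this handling of the value $g(0)$: when $g(0)\ne 0$ the naive identity $U g(|A|)U^* = g(|A^*|)$ fails (the kernel of $A^*$, i.e. $(\mathrm{ran}\,U)^\perp$, contributes the subtracted projection term), and one must argue through the operator inequality above rather than an exact equality.
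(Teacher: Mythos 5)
Your proof is correct and complete; in particular, your treatment of the case $g(0)\neq 0$ via the operator inequality $Ug(|A|)^2U^*\le g(|A^*|)^2 $ is sound. Note, however, that the paper offers no proof of this lemma at all: it is quoted as known from Kittaneh's paper \cite{Kit88}, and your argument (polar decomposition, Cauchy--Schwarz, and the intertwining $|A^*|=U|A|U^*$ extended to continuous functions by Stone--Weierstrass) essentially reconstructs Kittaneh's original one. One simplification worth recording: the delicate point you flag can be bypassed entirely, because the \emph{one-sided} identity $\varphi(|A|)U^*=U^*\varphi(|A^*|)$ holds exactly for \emph{every} continuous $\varphi$, with no hypothesis at $0$ --- check it on monomials via $U^*|A^*|=U^*U|A|U^*=|A|U^*$ (using $U^*U|A|=|A|$), note that constants commute with $U^*$ trivially, and pass to uniform limits. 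Then $\|g(|A|)U^*y\|=\|U^*g(|A^*|)y\|\le\|g(|A^*|)y\|$ simply because $U^*$ is a contraction, and no correction term ever appears. What genuinely fails for $g(0)\neq 0$ is only the \emph{two-sided} conjugation $Ug(|A|)U^*=g(|A^*|)$, exactly as you observe; the proof just never needs it.
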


Now we prove the following theorem.

 \begin{theorem}\label{th1}
 	Let $A_{ij}\in \mathcal{B}(\mathcal{H})$ for all $i,j=1,2,\ldots,n.$ If $f,g :[0,\infty)\to [0,\infty)$ are continuous functions, satisfy $f(\lambda)g(\lambda)=\lambda$, for all $\lambda \in [0,\infty)$, then
 	$$  w\left(  
 	\begin{bmatrix}
 		A_{11}& A_{12}& \ldots& A_{1n}\\
 		A_{21}& A_{22}& \ldots& A_{2n}\\
 		\vdots&\vdots&  & \vdots\\
 		A_{n1}& A_{n2}& \ldots& A_{nn}
 	\end{bmatrix}
 	\right) \leq w\left(\begin{bmatrix}
 		w(A_{11})& a_{12}& \ldots& a_{1n}\\
 		0& w(A_{22})& \ldots& a_{2n}\\
 		\vdots&\vdots&  & \vdots\\
 		0& 0& \ldots& w(A_{nn})
 	\end{bmatrix}
 	\right), $$
 	where 	$a_{ij}=\left \| f^2(|A_{ij}|)+g^2(|A_{ji}^*|)\right\|^{1/2} \left \| f^2(|A_{ji}|)+g^2(|A_{ij}^*|)\right\|^{1/2}$.
 	
 \end{theorem}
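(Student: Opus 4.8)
The plan is to test the quadratic form of $A$ against an arbitrary unit vector of the direct sum and reduce everything to a scalar inequality on $\mathbb{R}^n$. Write $B$ for the upper triangular matrix on the right-hand side, so $B_{ii}=w(A_{ii})$, $B_{ij}=a_{ij}$ for $i<j$, and $B_{ij}=0$ for $i>j$. Fixing a unit vector $X=(x_1,\dots,x_n)^t\in\oplus_{i=1}^n\mathcal{H}$, so that $\sum_i\|x_i\|^2=1$, I would expand
$$\langle AX,X\rangle=\sum_{i,j=1}^n\langle A_{ij}x_j,x_i\rangle,$$
then apply the triangle inequality and separate the diagonal from the off-diagonal part. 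On the diagonal I bound $|\langle A_{ii}x_i,x_i\rangle|\le w(A_{ii})\|x_i\|^2$, and for $i<j$ I pair the $(i,j)$ and $(j,i)$ terms together.

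For each such pair, Lemma \ref{lem1} gives $|\langle A_{ij}x_j,x_i\rangle|\le\|f(|A_{ij}|)x_j\|\,\|g(|A_{ij}^*|)x_i\|$ and $|\langle A_{ji}x_i,x_j\rangle|\le\|f(|A_{ji}|)x_i\|\,\|g(|A_{ji}^*|)x_j\|$. Setting $p=\|f(|A_{ij}|)x_j\|$, $q=\|g(|A_{ij}^*|)x_i\|$, $r=\|f(|A_{ji}|)x_i\|$, $s=\|g(|A_{ji}^*|)x_j\|$, the crucial move is the \emph{crossed} Cauchy--Schwarz estimate $pq+rs\le\sqrt{p^2+s^2}\,\sqrt{q^2+r^2}$, which holds because $(p^2+s^2)(q^2+r^2)-(pq+rs)^2=(pr-qs)^2\ge0$. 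This particular grouping is precisely what matches the definition of $a_{ij}$: the $x_j$-terms combine as $p^2+s^2=\langle(f^2(|A_{ij}|)+g^2(|A_{ji}^*|))x_j,x_j\rangle\le\|f^2(|A_{ij}|)+g^2(|A_{ji}^*|)\|\,\|x_j\|^2$, and the $x_i$-terms combine as $q^2+r^2=\langle(f^2(|A_{ji}|)+g^2(|A_{ij}^*|))x_i,x_i\rangle\le\|f^2(|A_{ji}|)+g^2(|A_{ij}^*|)\|\,\|x_i\|^2$. Hence the pair is bounded by $a_{ij}\|x_i\|\|x_j\|$.

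Assembling these estimates with $u=(\|x_1\|,\dots,\|x_n\|)^t$, a unit vector in $\mathbb{R}^n$, I obtain
$$|\langle AX,X\rangle|\le\sum_i w(A_{ii})\|x_i\|^2+\sum_{i<j}a_{ij}\|x_i\|\|x_j\|=\langle \mathrm{Re}(B)\,u,u\rangle,$$
where $\mathrm{Re}(B)=\tfrac12(B+B^*)$ is the Hermitian matrix with diagonal $w(A_{ii})$ and symmetric off-diagonal entries $\tfrac12 a_{ij}$. To finish, since $\mathrm{Re}(B)$ is Hermitian and $u$ is a unit vector, $\langle\mathrm{Re}(B)u,u\rangle\le\|\mathrm{Re}(B)\|=w(\mathrm{Re}(B))$; and because $w(\cdot)$ is a norm with $w(B^*)=w(B)$, I conclude $w(\mathrm{Re}(B))\le\tfrac12(w(B)+w(B^*))=w(B)$. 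Taking the supremum over all unit $X$ then yields $w(A)\le w(B)$.

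The main obstacle is the pairing step: one must select the correct Cauchy--Schwarz grouping so that the operator norms fuse into $\|f^2(|A_{ij}|)+g^2(|A_{ji}^*|)\|$ and $\|f^2(|A_{ji}|)+g^2(|A_{ij}^*|)\|$, rather than the naive separation into $\|f^2(|A_{ij}|)\|$- and $\|g^2(|A_{ij}^*|)\|$-type terms that would only reproduce a weaker bound. Everything else is routine bookkeeping together with the standard passage from the Hermitian real part $\mathrm{Re}(B)$ back to the numerical radius $w(B)$.
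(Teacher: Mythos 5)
Your proposal is correct and follows essentially the same route as the paper: expand $\langle AX,X\rangle$, pair the $(i,j)$ and $(j,i)$ terms, apply Kittaneh's lemma and the crossed Cauchy--Schwarz grouping so the norms fuse into $\|f^2(|A_{ij}|)+g^2(|A_{ji}^*|)\|^{1/2}\|f^2(|A_{ji}|)+g^2(|A_{ij}^*|)\|^{1/2}$, and reduce to a quadratic form on the vector of norms. The only cosmetic difference is at the end, where you pass through $\mathrm{Re}(B)$ and the bound $w(\mathrm{Re}(B))\le w(B)$, while the paper bounds $\langle \hat{A}|x|,|x|\rangle\le w(\hat{A})$ directly; both are valid.
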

\begin{proof}
	Take ${A}=\begin{bmatrix}
		A_{11}& A_{12}& \ldots& A_{1n}\\
		A_{21}& A_{22}& \ldots& A_{2n}\\
		\vdots&\vdots&  & \vdots\\
		A_{n1}& A_{n2}& \ldots& A_{nn}\\
	\end{bmatrix}$ and  $x=  \begin{bmatrix}
	x_1\\
	x_2\\
	\vdots\\
	x_n
\end{bmatrix}\in \oplus_{i=1}^{n} \mathcal{H}$ with $\|x\|=1$  (i.e., $\|x_1\|^2+\|x_2\|^2+\ldots+\|x_n\|^2=1).$
Now,
\begin{eqnarray}\label{p1}
	|\langle Ax,x\rangle|& = & \left| \sum_{i,j=1}^{n}  \langle A_{ij}x_j,x_i\rangle \right|\notag\\
	& \leq &  \sum_{i,j=1}^{n}  \left| \langle A_{ij}x_j,x_i\rangle \right|\notag\\
	&=& \sum_{i=1}^{n}  \left| \langle A_{ii}x_i,x_i\rangle \right|+\sum_{\underset{i\neq j}{i,j=1}}^{n}  \left| \langle A_{ij}x_j,x_i\rangle \right|\notag\\
	&=& \sum_{i=1}^{n}  \left| \langle A_{ii}x_i,x_i\rangle \right|+\sum_ {{\underset{i< j}{i,j=1}}}^{n}  \left( \left| \langle A_{ij}x_j,x_i\rangle \right|+\left| \langle A_{ji}x_i,x_j\rangle \right| \right)\notag\\
		&\leq & \sum_{i=1}^{n}  \left| \langle A_{ii}x_i,x_i\rangle \right| \notag \\
		 &+& \sum_ {{\underset{i< j}{i,j=1}}}^{n}  \left( \left\| f(| A_{ij}|)x_j \right\|   \left\|g(| A_{ij}^*|)x_i \right\|  +\left\| f(| A_{ji}|)x_i \right\|   \left\|g(| A_{ji}^*|)x_j \right\| \right), \notag\\ 
\end{eqnarray}
where the last inequality follows by using Lemma \ref{lem1}. Now, by Cauchy-Schwarz inequality, we get  
\begin{eqnarray*}
	&& \left\| f(| A_{ij}|)x_j \right\|   \left\|g(| A_{ij}^*|)x_i \right\|  +\left\| f(| A_{ji}|)x_i \right\|   \left\|g(| A_{ji}^*|)x_j \right\|\\
	 &&\leq  \left(	\left\| f(| A_{ij}|)x_j \right\|^2+   \left\|g(| A_{ji}^*|)x_j \right\|^2 \right)^{1/2} \left(\left\| f(| A_{ji}|)x_i \right\|^2+ \left\|g(| A_{ij}^*|)x_i \right\|^2 \right)^{1/2}\\
	 &&=	\left\langle \left(  f^2 (| A_{ij}|)+ g^2(| A_{ji}^*|)\right) x_j, x_j \right\rangle ^{1/2} 
	 \left\langle \left(  f^2 (| A_{ji}|)+ g^2(| A_{ij}^*|)\right) x_i, x_i \right\rangle ^{1/2}.
\end{eqnarray*}
Therefore, from \eqref{p1}, we obtain that
\begin{eqnarray*}
	|\langle Ax,x\rangle| &\leq& \sum_{i=1}^{n} w(A_{ii})\|x_i\|^2\\ &+&\sum_ {{\underset{i< j}{i,j=1}}}^{n}  w^{1/2}\left(  f^2 (| A_{ij}|)+ g^2(| A_{ji}^*|)\right) w^{1/2} \left(  f^2 (| A_{ji}|)+ g^2(| A_{ij}^*|)\right) \|x_i\|\|x_j\|\\
	&=& \left \langle \hat{A} |x|, |x| \right \rangle,
\end{eqnarray*}
where $|x|=\begin{bmatrix}
	\| x_1\|\\
	 \|x_2\|\\
	  \vdots\\
	   \|x_n\|
\end{bmatrix}\in  \mathbb{C}^n$ is an unit vector and 
$\hat{A}=\begin{bmatrix} \hat{a}_{ij}
\end{bmatrix}$ is an $n\times n$ complex matrix,  with 
$$\hat{a}_{ij}= \begin{cases}
	w(A_{ii}) \text{ when $i=j,$} \\
	w^{1/2}\left(  f^2 (| A_{ij}|)+ g^2(| A_{ji}^*|)\right) w^{1/2} \left(  f^2 (| A_{ji}|)+ g^2(| A_{ij}^*|)\right) \text{ when $i<j,$} \\
	0 \text{ when $i>j$} .
\end{cases}$$
Therefore, $	|\langle Ax,x\rangle| \leq w(\hat{A})$ for all $x\in \oplus_{i=1}^n \mathcal{H}$ with $\|x\|=1.$ This implies $w(A)\leq w(\hat{A}),$ as desired. 
\end{proof}

By considering $f(\lambda)=\lambda^{t}$ and $g(\lambda)=\lambda^{1-t}$ ($0\leq t\leq 1$) in Theorem \ref{th1}, we obtain the following corollary.
\begin{cor}\label{cor1}
	If $A_{ij}\in \mathcal{B}(\mathcal{H})$ for all $i,j=1,2,\ldots,n,$  then
	$$  w\left(  
	\begin{bmatrix}
		A_{11}& A_{12}& \ldots& A_{1n}\\
		A_{21}& A_{22}& \ldots& A_{2n}\\
		\vdots&\vdots&  & \vdots\\
		A_{n1}& A_{n2}& \ldots& A_{nn}\\
	\end{bmatrix}
	\right) \leq w\left(\begin{bmatrix}
		w(A_{11})& a_{12}& \ldots& a_{1n}\\
		0& w(A_{22})& \ldots& a_{2n}\\
		\vdots&\vdots&  & \vdots\\
		0& 0& \ldots& w(A_{nn})\\
	\end{bmatrix}
	\right), $$
	where  $a_{ij}=\left\| |A_{ij}|^{2t}+|A_{ji}^*|^{2(1-t)}\right\|^{1/2} \left\| |A_{ji}|^{2t}+|A_{ij}^*|^{2(1-t)}\right\|^{1/2}$, $0\leq t \leq 1$. 
	
\end{cor}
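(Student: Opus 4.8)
The plan is to obtain Corollary \ref{cor1} as a direct specialization of Theorem \ref{th1}, choosing the two free functions to be the power functions $f(\lambda)=\lambda^{t}$ and $g(\lambda)=\lambda^{1-t}$ with $0\le t\le 1$, and then simplifying the off-diagonal entries $a_{ij}$ by means of the continuous functional calculus.

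First I would verify that this pair $(f,g)$ is admissible in the sense of Theorem \ref{th1}. Since $0\le t\le 1$, both exponents $t$ and $1-t$ are nonnegative, so $\lambda\mapsto\lambda^{t}$ and $\lambda\mapsto\lambda^{1-t}$ send $[0,\infty)$ into $[0,\infty)$ and are continuous there (at the endpoints $t\in\{0,1\}$ one adopts the usual convention $0^{0}=1$, under which the constant factor remains continuous). Moreover $f(\lambda)g(\lambda)=\lambda^{t}\lambda^{1-t}=\lambda$ for all $\lambda\ge 0$, so the hypotheses of Theorem \ref{th1} hold and the theorem applies verbatim with these $f$ and $g$.

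Next I would translate the abstract entries into power functions. Here $f^{2}(\lambda)=\lambda^{2t}$ and $g^{2}(\lambda)=\lambda^{2(1-t)}$, and since $|A_{ij}|$, $|A_{ij}^{*}|$, $|A_{ji}|$ and $|A_{ji}^{*}|$ are positive operators, the continuous functional calculus gives $f^{2}(|A_{ij}|)=|A_{ij}|^{2t}$, $g^{2}(|A_{ji}^{*}|)=|A_{ji}^{*}|^{2(1-t)}$, $f^{2}(|A_{ji}|)=|A_{ji}|^{2t}$ and $g^{2}(|A_{ij}^{*}|)=|A_{ij}^{*}|^{2(1-t)}$. Substituting these identities into the formula $a_{ij}=\bigl\|f^{2}(|A_{ij}|)+g^{2}(|A_{ji}^{*}|)\bigr\|^{1/2}\,\bigl\|f^{2}(|A_{ji}|)+g^{2}(|A_{ij}^{*}|)\bigr\|^{1/2}$ supplied by Theorem \ref{th1} produces exactly $a_{ij}=\bigl\||A_{ij}|^{2t}+|A_{ji}^{*}|^{2(1-t)}\bigr\|^{1/2}\bigl\||A_{ji}|^{2t}+|A_{ij}^{*}|^{2(1-t)}\bigr\|^{1/2}$, the expression asserted in the corollary.

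The argument involves no genuine obstacle: the only point demanding a little care is the admissibility of the endpoint exponents together with the functional-calculus identification $f^{2}(|A|)=|A|^{2t}$. Once these bookkeeping items are settled, the conclusion is immediate from Theorem \ref{th1} with the stated choice of $f$ and $g$.
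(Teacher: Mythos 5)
Your proposal is correct and matches the paper's own argument exactly: the paper derives Corollary \ref{cor1} by simply setting $f(\lambda)=\lambda^{t}$ and $g(\lambda)=\lambda^{1-t}$, $0\leq t\leq 1$, in Theorem \ref{th1}. Your additional care about the endpoint exponents and the functional-calculus identification $f^{2}(|A|)=|A|^{2t}$ is sound bookkeeping that the paper leaves implicit.
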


Also, considering $t= \frac12$ in Corollary \ref{cor1}, we obtain the following bound.
\begin{cor}\label{cor2}
	If $A_{ij}\in \mathcal{B}(\mathcal{H})$ for all $i,j=1,2,\ldots,n,$  then
	$$  w\left(  
	\begin{bmatrix}
		A_{11}& A_{12}& \ldots& A_{1n}\\
		A_{21}& A_{22}& \ldots& A_{2n}\\
		\vdots&\vdots&  & \vdots\\
		A_{n1}& A_{n2}& \ldots& A_{nn}\\
	\end{bmatrix}
	\right) \leq w\left(\begin{bmatrix}
		w(A_{11})& a_{12}& \ldots& a_{1n}\\
		0& w(A_{22})& \ldots& a_{2n}\\
		\vdots&\vdots&  & \vdots\\
		0& 0& \ldots& w(A_{nn})\\
	\end{bmatrix}
	\right), $$
	where $a_{ij}=\left\| |A_{ij}| +|A_{ji}^*| \right \|^{1/2} \left\| |A_{ji}|+|A_{ij}^*| \right\|^{1/2}$. 
	
\end{cor}

\begin{remark}
It is well known  that if $A=[a_{ij}]$ is an $n\times n$ complex matrix such that $a_{ij}\geq 0$ for all $i,j=1,2,\ldots,n$, then 
\begin{eqnarray}\label{p9}
	w(A)=w\left(\frac{A+A^*}{2}\right)=r\left(\frac{A+A^*}{2}\right),
\end{eqnarray}
 where $r(\cdot)$ denotes the spectral radius, see in \cite[p. 44]{Horn}. Using the argument in \eqref{p9}, we obtain that
$$w\left(\begin{bmatrix}
	w(A_{11})& a_{12}& \ldots& a_{1n}\\
	0& w(A_{22})& \ldots& a_{2n}\\
	\vdots&\vdots&  & \vdots\\
	0& 0& \ldots& w(A_{nn})\\
\end{bmatrix}
\right)=w\left(\begin{bmatrix}
	w(A_{11})& \frac12 a_{12}& \ldots& \frac12 a_{1n}\\
	\frac12 a_{12} & w(A_{22})& \ldots& \frac12 a_{2n}\\
	\vdots&\vdots&  & \vdots\\
	\frac12 a_{1n} & \frac12 a_{2n}& \ldots& w(A_{nn})\\
\end{bmatrix}
\right), $$
where $a_{ij}=\left\| |A_{ij}| +|A_{ji}^*| \right \|^{1/2} \left\| |A_{ji}|+|A_{ij}^*| \right\|^{1/2}$.
And also, $$w\left(\begin{bmatrix}
	w(A_{11})& \|A_{12}\|& \ldots& \|A_{1n}\|\\
	\|A_{21}\|& w(A_{22})& \ldots& \|A_{2n}\|\\
	\vdots&\vdots&  & \vdots\\
	\|A_{n1}\|& \|A_{n2}\|& \ldots& w(A_{nn})\\
\end{bmatrix}
\right)=w\left(\begin{bmatrix}
	w(A_{11})& \frac12 b_{12}& \ldots& \frac12 b_{1n}\\
	\frac12 b_{12} & w(A_{22})& \ldots& \frac12 b_{2n}\\
	\vdots&\vdots&  & \vdots\\
	\frac12 b_{1n} & \frac12 b_{2n}& \ldots& w(A_{nn})\\
\end{bmatrix}
\right), $$
where $b_{ij}=\|A_{ij}\|+\|A_{ji}\|$. 
Clearly, $$a_{ij}= \left \| |A_{ij}| +|A_{ji}^*| \right\|^{1/2} \left\| |A_{ji}|+|A_{ij}^*| \right \|^{1/2} \leq \|A_{ij}\|+\|A_{ji}\|=b_{ij}, \text{ for all $i,j.$} $$
The spectral radius monotonicity of matrices with non-negative entries (see in \cite[p. 491]{Horn2} and the equalities in \eqref{p9} imply that if $A=\begin{bmatrix}
	a_{ij}
\end{bmatrix}$ and $B=\begin{bmatrix}
b_{ij}
\end{bmatrix}$ are $n\times n$ complex matrices with $0\leq a_{ij}\leq b_{ij}$ for all $i,j=1,2,\ldots,n,$ then $w(A)\leq w(B).$
Therefore, 
$$w\left(\begin{bmatrix}
	w(A_{11})& a_{12}& \ldots& a_{1n}\\
	0& w(A_{22})& \ldots& a_{2n}\\
	\vdots&\vdots&  & \vdots\\
	0& 0& \ldots& w(A_{nn})\\
\end{bmatrix}
\right) \leq w\left(\begin{bmatrix}
	w(A_{11})& \|A_{12}\|& \ldots& \|A_{1n}\|\\
	\|A_{21}\|& w(A_{22})& \ldots& \|A_{2n}\|\\
	\vdots&\vdots&  & \vdots\\
	\|A_{n1}\|& \|A_{n2}\|& \ldots& w(A_{nn})\\
\end{bmatrix}
\right).$$
Thus, the bound in Corollary \ref{cor2} improves the bound in \eqref{p8}.
\end{remark}

Next, we develop  the numerical radius bounds of $2\times 2$ operator matrices.
Considering $n=2$ in Theorem \ref{th1}, we obtain that the following bound for the numerical radius of $2\times 2$ operator matrices.

 \begin{cor}\label{cor3}
	Let $A,B,C,D\in \mathcal{B}(\mathcal{H})$. If $f,g :[0,\infty)\to [0,\infty)$ are continuous functions, satisfy $f(\lambda)g(\lambda)=\lambda$, for all $\lambda \in [0,\infty)$, then
	$$  w\left(  
	\begin{bmatrix}
		A& B\\
		C& D
	\end{bmatrix}
	\right) \leq \frac{w(A)+w(D)+ \sqrt{\big (w(A)-w(D)\big)^2+a^2  }}{2} ,$$
	where $a =\left \| f^2(|B|)+g^2(|C^*|)\right\|^{1/2} \left \| f^2(|C|)+g^2(|B^*|)\right \|^{1/2}$. In particular, for $f(\lambda)=g(\lambda)=\lambda^{1/2},$ 
	\begin{eqnarray*}
		w\left(  
		\begin{bmatrix}
			A& B\\
			C& D
		\end{bmatrix}
		\right) \leq \frac{w(A)+w(D)+ \sqrt{\big (w(A)-w(D)\big)^2+ \left \| |B| +|C^*|\right\| \left \| |C| + |B^*|\right \| }}{2}.
	\end{eqnarray*}
\end{cor}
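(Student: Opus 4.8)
The plan is to specialize Theorem~\ref{th1} to the case $n=2$ and then evaluate the numerical radius of the resulting $2\times 2$ scalar majorant in closed form. Setting $A_{11}=A$, $A_{12}=B$, $A_{21}=C$, $A_{22}=D$ in Theorem~\ref{th1}, the single off-diagonal entry $a_{12}$ becomes exactly
\[
a=\left\|f^2(|B|)+g^2(|C^*|)\right\|^{1/2}\left\|f^2(|C|)+g^2(|B^*|)\right\|^{1/2},
\]
so the theorem immediately yields
\[
w\left(\begin{bmatrix} A & B \\ C & D \end{bmatrix}\right)\leq w\left(\begin{bmatrix} w(A) & a \\ 0 & w(D) \end{bmatrix}\right).
\]
It then remains only to compute the right-hand side.

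For this I would invoke the observation recorded in the Remark above: the matrix $\begin{bmatrix} w(A) & a \\ 0 & w(D) \end{bmatrix}$ has nonnegative entries, so by the equalities in \eqref{p9} its numerical radius equals the spectral radius of its symmetric part,
\[
w\left(\begin{bmatrix} w(A) & a \\ 0 & w(D) \end{bmatrix}\right)=r\left(\begin{bmatrix} w(A) & a/2 \\ a/2 & w(D) \end{bmatrix}\right).
\]
The $2\times 2$ real symmetric matrix on the right has eigenvalues $\tfrac12\big(w(A)+w(D)\pm\sqrt{(w(A)-w(D))^2+a^2}\big)$; since $w(A)+w(D)\geq 0$, the larger root dominates in absolute value and is the spectral radius, namely
\[
\frac{w(A)+w(D)+\sqrt{(w(A)-w(D))^2+a^2}}{2}.
\]
Combining this with the previous display gives the asserted general bound.

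Finally, the ``in particular'' statement follows by taking $f(\lambda)=g(\lambda)=\lambda^{1/2}$, so that $f^2(|B|)=|B|$, $g^2(|C^*|)=|C^*|$, and similarly for the remaining terms; then $a^2=\big\||B|+|C^*|\big\|\,\big\||C|+|B^*|\big\|$, and substitution into the general bound completes the proof. I do not expect a genuine obstacle here, as the argument is a direct specialization of Theorem~\ref{th1} followed by an elementary eigenvalue computation. The only points requiring a moment's care are verifying that the reduction to a spectral radius through \eqref{p9} is legitimate (which rests on every entry of the majorizing matrix being nonnegative, and here $w(A),w(D),a\geq 0$), and confirming that the positive root is indeed the spectral radius rather than the negative one; both are immediate once noted.
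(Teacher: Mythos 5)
Your proposal is correct and follows exactly the paper's own route: specialize Theorem~\ref{th1} to $n=2$, pass from the upper-triangular nonnegative majorant to its symmetric part via \eqref{p9}, and identify the resulting spectral radius with the larger eigenvalue $\tfrac12\bigl(w(A)+w(D)+\sqrt{(w(A)-w(D))^2+a^2}\bigr)$. The only difference is cosmetic: the paper stops at the spectral-radius expression and leaves the $2\times 2$ eigenvalue computation implicit, whereas you write it out, including the (correct) check that the positive root dominates because $w(A)+w(D)\geq 0$.
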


\begin{proof}
	Taking $n=2$, and $A_{11}=A, A_{12}=B, A_{21}=C$ and $A_{22}=D$ in Theorem \ref{th1}, we obtain that
	\begin{eqnarray*}
		w\left(  
		\begin{bmatrix}
			A& B\\
			C& D
		\end{bmatrix}
		\right) &\leq &  w\left(\begin{bmatrix}
			w(A)& a \\
			0& w(D)
		\end{bmatrix}
		\right)\\
		&=&  w\left(\begin{bmatrix}
			w(A)& \frac12 a \\
			\frac12 a& w(D)
		\end{bmatrix}
		\right) (\text{by using \eqref{p9}})\\
		&=& r\left(\begin{bmatrix}
			w(A)& \frac12 a \\
			\frac12 a& w(D)
		\end{bmatrix}
		\right).
	\end{eqnarray*} 
This completes the proof. 
\end{proof}

\begin{remark}
(i)	Clearly, $\left \| |B| +|C^*|\right\| \left \| |C| + |B^*|\right \|\leq (\|B\|+\|C\|)^2$. Therefore, the bound in Corollary \ref{cor3} (second bound) is stronger than the bound \cite[Theorem 2.1]{Paul_IJMMS_2012}, that is 
	\begin{eqnarray*}
		w\left(  
		\begin{bmatrix}
			A& B\\
			C& D
		\end{bmatrix}
		\right) \leq \frac{w(A)+w(D)+ \sqrt{\big (w(A)-w(D)\big)^2+ \left (\| B\right\| + \left \| C \right \|)^2 }}{2},
	\end{eqnarray*}
	(ii) Considering $A=D=0$ in Corollary \ref{cor3}, we obtain that if $B,C\in \mathcal{B}(\mathcal{H})$, then
	\begin{eqnarray}\label{p2}
		w\left(  
		\begin{bmatrix}
			0& B\\
			C& 0
		\end{bmatrix}
		\right) \leq \frac12 { \left \| |B| +|C^*|\right\|^{1/2} \left \| |C| + |B^*|\right \|^{1/2} }.
	\end{eqnarray}
This bound is also given in \cite[Theorem 4]{Abu_LAA_2015}.\\
(iii) Clearly, for every $A\in \mathcal{B}(\mathcal{H})$,  $w\left(\frac{A+A^*}{2}\right)\leq w(A)$ and $w\left(\frac{A-A^*}{2}\right)\leq w(A).$ Therefore, using this argument and the bound in \eqref{p2}, we obtain that if $B,C\in \mathcal{B}(\mathcal{H})$ are self-adjoint, then
\begin{eqnarray}\label{p3}
\frac12 \|B\pm C\|\leq 	w\left(  
	\begin{bmatrix}
		0& B\\
		C& 0
	\end{bmatrix}
	\right) \leq \frac12  \left \| |B| +|C|\right\| .
\end{eqnarray}
Moreover, if $B,C\in \mathcal{B}(\mathcal{H})$ are positive (semi-definite), then
\begin{eqnarray}\label{p4}
		w\left(  
	\begin{bmatrix}
		0& B\\
		C& 0
	\end{bmatrix}
	\right) = \frac12  \left \| B +C \right\|,
\end{eqnarray}
which is also given in \cite[Corollary 3]{Abu_LAA_2015}.
\end{remark}

Next, by applying the bound in Corollary \ref{cor1}, we obtain the following numerical radius bound for the sum of the product of two pairs of operators. 

\begin{theorem}\label{th2}
	Let $A,B,C,D\in \mathcal{B}(\mathcal{H}).$ Then
	\begin{eqnarray*}
		w(AB\pm CD) &\leq & \frac14 \Big( \left\| |A|^{2t}+ |B^*|^{2(1-t)} \right\| \left\| |B|^{2t}+ |A^*|^{2(1-t)} \right\|  \\
		&& +\left\| |C|^{2t}+ |D^*|^{2(1-t)} \right\| \left\| |D|^{2t}+ |C^*|^{2(1-t)} \right\|  \Big),
	\end{eqnarray*} 
	for all $t\in [0,1].$ In particular, for $t=\frac12$,
	\begin{eqnarray*}
		w(AB \pm CD) \leq \frac14 \left( \left\| |A|+ |B^*| \right\| \left\| |B|+ |A^*| \right\|   +\left\| |C|+ |D^*| \right\| \left\| |D|+ |C^*| \right\|  \right).
	\end{eqnarray*}
\end{theorem}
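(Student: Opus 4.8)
The plan is to realize the sum $AB\pm CD$ as a single product of two rectangular block operators and then feed the resulting $3\times 3$ off-diagonal operator matrix into Corollary~\ref{cor1}. The two facts that drive the reduction are the power inequality $w(N^2)\le w(N)^2$ (already recorded in the introduction) and the elementary identity $w\big(\begin{bmatrix} X & 0\\ 0 & Y\end{bmatrix}\big)=\max\{w(X),w(Y)\}$ for a block-diagonal operator. Combining them, for any operator of the form $N=\begin{bmatrix} 0 & P\\ Q & 0\end{bmatrix}$ on a direct sum of Hilbert spaces we have $N^2=\begin{bmatrix} PQ & 0\\ 0 & QP\end{bmatrix}$, so that $w(PQ)\le w(N^2)\le w(N)^2$.

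I would then introduce the factorization $P=\begin{bmatrix} A & C\end{bmatrix}\colon \mathcal H\oplus\mathcal H\to\mathcal H$ and $Q=\begin{bmatrix} B\\ D\end{bmatrix}\colon \mathcal H\to\mathcal H\oplus\mathcal H$, for which $PQ=AB+CD$. With this choice, $N=\begin{bmatrix} 0 & P\\ Q & 0\end{bmatrix}$ is the genuine square operator $\begin{bmatrix} 0 & A & C\\ B & 0 & 0\\ D & 0 & 0\end{bmatrix}$ on $\mathcal H\oplus\mathcal H\oplus\mathcal H$, and the paragraph above gives $w(AB+CD)\le w(N)^2$.

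Next I apply Corollary~\ref{cor1} to this $3\times 3$ matrix $N$. All three diagonal blocks are $0$, the $(2,3)$ term vanishes because $N_{23}=N_{32}=0$, and the only surviving entries of the dominating matrix $\hat N$ are $a_{12}=\|\,|A|^{2t}+|B^*|^{2(1-t)}\|^{1/2}\|\,|B|^{2t}+|A^*|^{2(1-t)}\|^{1/2}$ and $a_{13}=\|\,|C|^{2t}+|D^*|^{2(1-t)}\|^{1/2}\|\,|D|^{2t}+|C^*|^{2(1-t)}\|^{1/2}$. Thus $\hat N$ is the $3\times 3$ nonnegative matrix with first row $(0,a_{12},a_{13})$ and zeros elsewhere. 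By \eqref{p9}, $w(\hat N)=r(\tfrac12(\hat N+\hat N^*))$, and since the real symmetric matrix $\hat N+\hat N^*$ has characteristic polynomial $\lambda(\lambda^2-a_{12}^2-a_{13}^2)$, its spectral radius is $\sqrt{a_{12}^2+a_{13}^2}$; hence $w(\hat N)=\tfrac12\sqrt{a_{12}^2+a_{13}^2}$. Squaring yields $w(AB+CD)\le w(N)^2\le \tfrac14(a_{12}^2+a_{13}^2)$, which is exactly the asserted inequality for the $+$ sign, the case $t=\tfrac12$ following by specialization. For the $-$ sign I would simply replace $C$ by $-C$ throughout: this sends $AB+CD$ to $AB-CD$ while leaving the right-hand side unchanged, since $|-C|=|C|$ and $|(-C)^*|=|C^*|$.

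The only genuinely non-routine step is the opening move: spotting the factorization $AB\pm CD=PQ$ with the rectangular blocks $P,Q$ and checking that the product/power-inequality argument applies to them verbatim (the point being that $N$ is an honest square operator on $\mathcal H^{3}$, so $w(N^2)\le w(N)^2$ is legitimate). Once $N$ is identified, everything else is a direct invocation of Corollary~\ref{cor1} followed by the trivial spectral computation for the nonnegative matrix $\hat N$. I note in passing that a shorter route is available: by subadditivity of the numerical radius, $w(AB\pm CD)\le w(AB)+w(CD)$, and applying the single-product estimate (the $C=D=0$ instance of the construction above, giving $w(AB)\le\tfrac14 a_{12}^2$) to each summand produces the same bound.
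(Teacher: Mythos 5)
Your proposal is correct and takes essentially the same route as the paper: the paper's proof likewise forms the $3\times 3$ operator matrix $\begin{bmatrix} 0&A&C\\ B&0&0\\ D&0&0\end{bmatrix}$, applies Corollary \ref{cor1} to it, squares, and combines the power inequality with the spectral-radius identity \eqref{p9} to arrive at $\frac14(a^2+c^2)$. The only cosmetic differences are your packaging of the matrix via the rectangular blocks $P=[A\ \ C]$ and $Q=\begin{bmatrix}B\\ D\end{bmatrix}$, your explicit treatment of the $\pm$ sign by replacing $C$ with $-C$ (which the paper leaves implicit), and your closing remark on the subadditivity shortcut.
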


\begin{proof}
	Considering $n=3$, and $A_{11}=A_{22}=A_{33}=A_{23}=A_{32}=0$, $A_{12}=A$, $A_{13}=C$, $A_{21}=B$ and $A_{31}=D$ in Corollary \ref{cor1}, we obtain that
	\begin{eqnarray}\label{p5}
		w\left(\begin{bmatrix}
			0&A&C\\
			B&0&0\\
			D&0&0
		\end{bmatrix}\right) &\leq& w\left(\begin{bmatrix}
		0&a&c\\
		0&0&0\\
		0&0&0
	\end{bmatrix}\right),
	\end{eqnarray}
	where $$a=\left\| |A|^{2t}+ |B^*|^{2(1-t)} \right\|^{1/2} \left\| |B|^{2t}+ |A^*|^{2(1-t)} \right\|^{1/2}, $$ 
	$$c=\left\| |C|^{2t}+ |D^*|^{2(1-t)} \right\|^{1/2} \left\| |D|^{2t}+ |C^*|^{2(1-t)} \right\|^{1/2} .$$
	Now, we see that $$ \begin{bmatrix}
		0&A&C\\
		B&0&0\\
		D&0&0
	\end{bmatrix}^2=\begin{bmatrix}
	AB+CD&0&0\\
	0&BA&BC\\
	0&DA&DC
\end{bmatrix}.$$
Therefore,
\begin{eqnarray*}
	w(AB+CD) &\leq& w\left(\begin{bmatrix}
		AB+CD&0&0\\
		0&BA&BC\\
		0&DA&DC
	\end{bmatrix}\right)\\
&=& w\left(\begin{bmatrix}
	0&A&C\\
	B&0&0\\
	D&0&0
\end{bmatrix}^2\right)\\
&\leq& w\left(\begin{bmatrix}
	0&A&C\\
	B&0&0\\
	D&0&0
\end{bmatrix}\right)^2 \\
&\leq& w\left(\begin{bmatrix}
	0&a&c\\
	0&0&0\\
	0&0&0
\end{bmatrix}\right)^2 (\text{using the inequality \eqref{p5}})\\
&=& r\left(\begin{bmatrix}
	0&\frac12 a&\frac12 c\\
	\frac12 a&0&0\\
	\frac12 c&0&0
\end{bmatrix}\right)^2 (\text{by using \eqref{p9}})\\
&=& \frac14 \left( a^2+c^2\right).
\end{eqnarray*}
This completes the proof.
\end{proof}

Now, taking $C=B$ and $D=A$ in Theorem \ref{th2}, we obtain the following numerical radius bound for the commutator of operators.

\begin{cor}\label{cor5}
	Let $A,B\in \mathcal{B}(\mathcal{H}).$ Then
	\begin{eqnarray*}
		w(AB\pm BA) &\leq & \frac12 \left\| |A|^{2t}+ |B^*|^{2(1-t)} \right\| \left\| |B|^{2t}+ |A^*|^{2(1-t)} \right\|,
	\end{eqnarray*} 
	for all $t\in [0,1].$ In particular, for $t=\frac12$,
	\begin{eqnarray}\label{p11}
		w(AB \pm BA) \leq \frac12  \left\| |A|+ |B^*| \right\| \left\| |B|+ |A^*| \right\| .
	\end{eqnarray}
\end{cor}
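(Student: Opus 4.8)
The plan is to derive Corollary \ref{cor5} directly from Theorem \ref{th2} by the substitution $C=B$ and $D=A$, so almost all of the work is already done. First I would observe that under this substitution the two products appearing in Theorem \ref{th2} become $w(AB \pm CD) = w(AB \pm BA)$, which is exactly the commutator (and anticommutator) we wish to bound. So the left-hand side transforms automatically.

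Next I would examine how the right-hand side of Theorem \ref{th2} simplifies. With $C=B$ and $D=A$, the second bracketed term becomes
\begin{eqnarray*}
\left\| |C|^{2t}+ |D^*|^{2(1-t)} \right\| \left\| |D|^{2t}+ |C^*|^{2(1-t)} \right\|
= \left\| |B|^{2t}+ |A^*|^{2(1-t)} \right\| \left\| |A|^{2t}+ |B^*|^{2(1-t)} \right\|,
\end{eqnarray*}
which, up to the commuting order of the two norm factors, is identical to the first bracketed term. Hence the two summands in the $\frac14(\cdots)$ expression coincide, and their sum equals twice a single term. The factor of $\frac14$ then collapses to $\frac12$, yielding
\begin{eqnarray*}
w(AB\pm BA) \leq \frac12 \left\| |A|^{2t}+ |B^*|^{2(1-t)} \right\| \left\| |B|^{2t}+ |A^*|^{2(1-t)} \right\|,
\end{eqnarray*}
as claimed, for every $t\in[0,1]$.

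Finally, setting $t=\frac12$ replaces each exponent $2t$ and $2(1-t)$ by $1$, giving the particular case \eqref{p11}. The only point requiring a moment's care is verifying that the two product terms genuinely match after the substitution — one must check that $\||B|^{2t}+|A^*|^{2(1-t)}\|\,\||A|^{2t}+|B^*|^{2(1-t)}\|$ is the same product as the first term, which holds simply because scalar multiplication of norms commutes. I expect no real obstacle here; the corollary is a clean specialization of Theorem \ref{th2}, and the entire proof consists of this substitution followed by the arithmetic simplification $\frac14 \cdot 2 = \frac12$.
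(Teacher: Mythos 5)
Your proposal is correct and is exactly the paper's own route: the paper obtains Corollary \ref{cor5} by the same substitution $C=B$, $D=A$ in Theorem \ref{th2}, with the two summands coinciding (up to the order of the norm factors) so that $\frac14\cdot 2=\frac12$. Nothing is missing; the verification that the two bracketed terms agree is the only point of care, and you handled it.
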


	In \cite{Kittaneh_STD_2005}, Kittaneh  proved that for $A,B\in \mathcal{B}(\mathcal{H}),$ 	\begin{eqnarray}\label{p12}
			w(AB \pm BA) \leq \frac12  \left\| |A|^2+ |A^*|^2 + |B|^2+ |B^*|^2 \right\| .
		\end{eqnarray}
	With the help of numerical examples, we conclude that the bound \eqref{p11} is, in general, incomparable with the bound  \eqref{p12}.

Also, if we consider $C=D=0$ in Theorem \ref{th2}, then we obtain the following numerical radius bound for the product of two operators.
\begin{cor}\label{cor6}
	If $A,B\in \mathcal{B}(\mathcal{H})$, then
	\begin{eqnarray*}
		w(AB) &\leq & \frac14  \left\| |A|^{2t}+ |B^*|^{2(1-t)} \right\| \left\| |B|^{2t}+ |A^*|^{2(1-t)} \right\|.
	\end{eqnarray*} 
	for all $t\in [0,1].$ In particular, for $t=\frac12$,
	\begin{eqnarray}\label{p14}
		w(AB) \leq \frac14 \left\| |A|+ |B^*| \right\| \left\| |B|+ |A^*| \right\|.
	\end{eqnarray}
\end{cor}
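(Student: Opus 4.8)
The plan is to read off Corollary \ref{cor6} as the special case $C = D = 0$ of Theorem \ref{th2}, since all of the analytic content is already carried by that theorem. First I would substitute $C = D = 0$ into its two-sided estimate. On the left, $AB \pm CD = AB$, so the left-hand side becomes $w(AB)$ and the sign $\pm$ is irrelevant. On the right, the second summand $\||C|^{2t} + |D^*|^{2(1-t)}\|\,\||D|^{2t} + |C^*|^{2(1-t)}\|$ vanishes, because $|C| = |C^*| = |D| = |D^*| = 0$ forces each bracketed operator to be $0$ for $t \in (0,1)$. What remains is exactly $w(AB) \leq \frac14 \||A|^{2t} + |B^*|^{2(1-t)}\|\,\||B|^{2t} + |A^*|^{2(1-t)}\|$, and putting $t = \tfrac12$ yields \eqref{p14}.

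The only delicate point is the behaviour at the endpoints $t \in \{0,1\}$, where the functional calculus reads $\lambda^0$ as the constant $1$, so that $|C|^{0} = I \neq 0$ and the literal substitution above no longer kills the second summand. To get a statement uniform in $t$, I would instead mimic the proof of Theorem \ref{th2} directly on the $2\times 2$ matrix $\begin{bmatrix} 0 & A \\ B & 0 \end{bmatrix}$. Its square is the block-diagonal operator $\begin{bmatrix} AB & 0 \\ 0 & BA \end{bmatrix}$, so $w(AB) \leq w\!\left(\begin{bmatrix} 0 & A \\ B & 0 \end{bmatrix}^2\right)$, and the power inequality gives $w(AB) \leq w\!\left(\begin{bmatrix} 0 & A \\ B & 0 \end{bmatrix}\right)^2$. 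Applying Corollary \ref{cor3} with both diagonal blocks equal to $0$ and $f(\lambda) = \lambda^t$, $g(\lambda) = \lambda^{1-t}$ bounds the inner numerical radius by $\tfrac12 \||A|^{2t} + |B^*|^{2(1-t)}\|^{1/2}\||B|^{2t} + |A^*|^{2(1-t)}\|^{1/2}$, and squaring reproduces the claimed bound for every $t \in [0,1]$, including the endpoints.

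Because the genuinely hard step, namely the combination of Lemma \ref{lem1} with the Cauchy--Schwarz inequality, is already packaged inside Theorem \ref{th2} (equivalently Corollary \ref{cor3}), there is essentially no obstacle here: the result is a one-line specialization. The only thing that must be checked with care is precisely that the contribution of $C$ and $D$ drops out, which is why I would prefer the $2\times 2$ route, as it makes the vanishing automatic and avoids any endpoint bookkeeping.
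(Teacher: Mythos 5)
Your proposal is correct, and its core is exactly the paper's own proof: the paper obtains Corollary \ref{cor6} by the one-line specialization $C=D=0$ in Theorem \ref{th2}, with no further comment. What you add beyond the paper is the endpoint observation, and it is a legitimate one: for the hypotheses of Theorem \ref{th1} to hold at $t=0$ (resp.\ $t=1$) one must take $f\equiv 1$ to keep $f$ continuous with $f(\lambda)g(\lambda)=\lambda$, so the functional calculus gives $|C|^{0}=|D|^{0}=I$ even when $C=D=0$, and the second summand in Theorem \ref{th2} then contributes a spurious additive $\tfrac14$ instead of vanishing. Thus the literal substitution proves the claimed bound only for $t\in(0,1)$, and the paper silently glosses over $t\in\{0,1\}$. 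Your repair, namely
$w(AB)\le w\left(\left[\begin{smallmatrix} 0 & A\\ B & 0\end{smallmatrix}\right]^2\right)\le w\left(\left[\begin{smallmatrix} 0 & A\\ B & 0\end{smallmatrix}\right]\right)^2$
via the power inequality followed by Corollary \ref{cor3} with vanishing diagonal blocks and $f(\lambda)=\lambda^{t}$, $g(\lambda)=\lambda^{1-t}$, is precisely the argument of Theorem \ref{th2} rerun on a $2\times 2$ matrix instead of a $3\times 3$ one, so it stays entirely within the paper's toolkit while treating all $t\in[0,1]$ uniformly, endpoints included. Both routes yield the same estimate; yours is the one that actually proves the corollary as stated.
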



Further, taking $D=A$, $B=U$ and $C=U^*$ ($U$ is an unitary operator) in Theorem \ref{th2}, we obtain the following corollary.

\begin{cor}\label{cor7}
	Let $A,U\in \mathcal{B}(\mathcal{H}),$ where $U$ is an unitary operator. Then
	\begin{eqnarray*}
		w(AU\pm U^*A) &\leq & \frac12 \left\| |A|^{2t}+ I \right\| \left\|  |A^*|^{2(1-t)} +I\right\| .
	\end{eqnarray*} 
	for all $t\in [0,1].$ In particular, for $t=\frac12$,
\begin{eqnarray*}
			w(AU\pm U^*A) &\leq & \frac12 \left\| |A|+ I \right\| \left\|  |A^*| +I\right\|.
		\end{eqnarray*} 
	Also, for $t=0$ and $t=1$,
		\begin{eqnarray*}
		w(AU\pm U^*A) &\leq &  \min \left\{ \left\| |A|^{2}+ I \right\|, \left\| |A^*|^{2}+ I \right\| \right\}.
	\end{eqnarray*} 
\end{cor}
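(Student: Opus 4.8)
The plan is to derive Corollary \ref{cor7} as a direct specialization of Theorem \ref{th2}, so the only real work lies in simplifying the four operator-norm factors once the substitution $D=A$, $B=U$, $C=U^*$ has been made. First I would record the consequences of unitarity of $U$: since $U^*U=UU^*=I$ we have $|B|=|B^*|=|U|=|U^*|=I$, and because $C=U^*$ also $|C|=(UU^*)^{1/2}=I$ and $|C^*|=(U^*U)^{1/2}=I$; meanwhile $A$ occupies the slots of both $A$ and $D$, so $|D|=|A|$ and $|D^*|=|A^*|$. With these identities the substitution is purely mechanical.

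Next I would substitute into the bound of Theorem \ref{th2},
\begin{eqnarray*}
w(AB\pm CD) &\leq& \frac14\Big(\||A|^{2t}+|B^*|^{2(1-t)}\|\,\||B|^{2t}+|A^*|^{2(1-t)}\| \\
&& +\,\||C|^{2t}+|D^*|^{2(1-t)}\|\,\||D|^{2t}+|C^*|^{2(1-t)}\|\Big),
\end{eqnarray*}
and verify that the two bracketed products collapse to the same quantity. Replacing $|B^*|^{2(1-t)}$ and $|B|^{2t}$ by $I$ turns the first product into $\||A|^{2t}+I\|\,\||A^*|^{2(1-t)}+I\|$, while replacing $|C|^{2t}$ and $|C^*|^{2(1-t)}$ by $I$ and $|D|,|D^*|$ by $|A|,|A^*|$ turns the second product into $\|I+|A^*|^{2(1-t)}\|\,\||A|^{2t}+I\|$, which is identical. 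Adding the two equal terms converts the factor $\frac14$ into $\frac12$ and yields the stated general bound for every $t\in[0,1]$.

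Finally I would read off the three displayed specializations. The case $t=\tfrac12$ is immediate. For $t=0$ and $t=1$ the only point requiring care is the zeroth power: using the convention $|A|^{0}=I$ (the continuous extension of $\lambda\mapsto\lambda^{0}$ by the value $1$), one of the two factors becomes $\|I+I\|=\|2I\|=2$, which cancels the $\tfrac12$. Hence $t=0$ gives $\||A^*|^2+I\|$ and $t=1$ gives $\||A|^2+I\|$, and taking the smaller of the two yields the final $\min$ bound. I anticipate no genuine obstacle: the argument is a substitution followed by this small boundary-case bookkeeping, the only subtlety being to confirm that both bracket terms really do coincide, so that the constant improves from $\frac14$ to $\frac12$.
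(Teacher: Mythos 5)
Your proposal is correct and is exactly the paper's route: the paper obtains Corollary \ref{cor7} precisely by substituting $D=A$, $B=U$, $C=U^*$ into Theorem \ref{th2}, and your computation (unitarity giving $|U|=|U^*|=|C|=|C^*|=I$, the two products coinciding so that $\frac14$ improves to $\frac12$, and the endpoint cases $t=0,1$ giving the $\min$ bound) just fills in the details the paper leaves implicit.
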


Finally, by applying the bound obtained in Corollary \ref{cor2}, we develop an upper bound for the spectral radius of $\sum_{i=1}^n A_iB_i$, where $A_i,B_i\in \mathcal{B}(\mathcal{H}).$

\begin{theorem} \label{th3}
	Let $A_i,B_i\in \mathcal{B}(\mathcal{H})$ for all $i=1,2,\ldots,n.$ Then
	\begin{eqnarray*}
		r\left(\sum_{i=1}^n A_iB_i\right) &\leq&  w \left(\begin{bmatrix}
			w(B_1A_1)& a_{12}& \ldots & a_{1n} \\
			0& w(B_2A_2)& \ldots& a_{2n} \\
			\vdots&\vdots&  & \vdots\\
			0& 0& \ldots& w(B_nA_n)\\
		\end{bmatrix} \right)\\
		&\leq&\max_{1\leq i\leq n } \left\{ w(B_iA_i)+ \frac12 \sum_{j=i+1}^na_{ij}  +\frac12 \sum_{j=1}^{i-1}a_{ji}\right\},
	\end{eqnarray*} 
where $a_{ij}= \left \| |B_iA_j|+ |A_i^*B_j^*|\right\|^{1/2}  \left \| |B_jA_i|+ |A_j^*B_i^*|\right\|^{1/2}.$
\end{theorem}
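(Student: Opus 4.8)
The plan is to realize $\sum_{i=1}^n A_iB_i$ as a product of two block operators, exploit the commutation invariance of the spectral radius, and then invoke Corollary \ref{cor2}. First I would introduce the row operator $R=\begin{bmatrix} A_1 & A_2 & \cdots & A_n\end{bmatrix}\colon \oplus_{i=1}^n\mathcal{H}\to\mathcal{H}$ and the column operator $C=\begin{bmatrix} B_1 \\ B_2 \\ \vdots \\ B_n\end{bmatrix}\colon \mathcal{H}\to\oplus_{i=1}^n\mathcal{H}$. A direct computation gives $RC=\sum_{i=1}^n A_iB_i$ and $CR=\begin{bmatrix} B_iA_j\end{bmatrix}_{i,j=1}^n$. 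Since the nonzero spectra of $RC$ and $CR$ coincide, one has $r(RC)=r(CR)$, whence $r\big(\sum_{i=1}^n A_iB_i\big)=r\big(\begin{bmatrix} B_iA_j\end{bmatrix}\big)$.

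Next, using $r(\cdot)\le w(\cdot)$ and applying Corollary \ref{cor2} to the operator matrix $\begin{bmatrix} B_iA_j\end{bmatrix}$, whose $(i,j)$ entry is $M_{ij}=B_iA_j$, yields the first displayed inequality. The diagonal terms are $w(M_{ii})=w(B_iA_i)$, and for $i<j$ the off-diagonal entry produced by Corollary \ref{cor2} is $\||M_{ij}|+|M_{ji}^*|\|^{1/2}\||M_{ji}|+|M_{ij}^*|\|^{1/2}$. The identity $(B_jA_i)^*=A_i^*B_j^*$ gives $|M_{ji}^*|=|(B_jA_i)^*|=|A_i^*B_j^*|$ and likewise $|M_{ij}^*|=|A_j^*B_i^*|$, so this entry is exactly the stated $a_{ij}=\||B_iA_j|+|A_i^*B_j^*|\|^{1/2}\||B_jA_i|+|A_j^*B_i^*|\|^{1/2}$.

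For the second inequality I would appeal to \eqref{p9}: the upper triangular matrix with nonnegative entries has the same numerical radius as its symmetrization $S$, the real symmetric matrix with diagonal $w(B_iA_i)$ and off-diagonal entries $\tfrac12 a_{ij}$, and moreover $w(S)=r(S)$. Since $S$ has nonnegative entries, $r(S)$ is dominated by the maximum absolute row sum, whose $i$-th value is $w(B_iA_i)+\tfrac12\sum_{j>i}a_{ij}+\tfrac12\sum_{j<i}a_{ji}$; taking the maximum over $i$ completes the argument.

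The only point requiring care is the first step: $R$ and $C$ act between the different spaces $\mathcal{H}$ and $\oplus_{i=1}^n\mathcal{H}$, so rather than a naive $r(RC)=r(CR)$ for operators on a common space one must invoke $\sigma(RC)\setminus\{0\}=\sigma(CR)\setminus\{0\}$, which still forces equality of the spectral radii. Everything after that is routine bookkeeping, with the matching of $a_{ij}$ to Corollary \ref{cor2} and the row-sum estimate for nonnegative matrices being the two facts that carry the rest of the proof.
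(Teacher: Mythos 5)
Your proposal is correct and takes essentially the same route as the paper: the paper also realizes $\sum_{i=1}^n A_iB_i$ as a product, except that it pads the row and column of operators with zero blocks to form square $n\times n$ operator matrices $A,B$ on $\oplus_{i=1}^n\mathcal{H}$, so that the standard identity $r(AB)=r(BA)$ on a single space applies instead of your rectangular version $\sigma(RC)\setminus\{0\}=\sigma(CR)\setminus\{0\}$, and it then applies Corollary \ref{cor2} to $BA=\begin{bmatrix}B_iA_j\end{bmatrix}$ exactly as you do. The only other, equally immaterial, difference is in the second inequality: the paper bounds the quadratic form $\left|\langle Tx,x\rangle\right|$ of the upper-triangular nonnegative matrix directly, distributing the cross terms via $|x_i||x_j|\leq \frac12\left(|x_i|^2+|x_j|^2\right)$, whereas you symmetrize via \eqref{p9} and invoke the maximum row-sum bound for the spectral radius of a nonnegative matrix.
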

\begin{proof}
	Take $A=\begin{bmatrix}
		A_1& A_{2}& \ldots & A_{n} \\
		0& 0& \ldots& 0 \\
		\vdots&\vdots&  & \vdots\\
		0& 0& \ldots& 0\\
	\end{bmatrix}$ and $B=\begin{bmatrix}
	B_1& 0& \ldots & 0 \\
	B_2& 0& \ldots& 0 \\
	\vdots&\vdots&  & \vdots\\
	B_n& 0& \ldots& 0\\
\end{bmatrix}.$ We have,
\begin{eqnarray}\label{p10}
	r\left(\sum_{i=1}^n A_iB_i\right) =r(AB)=r(BA)\leq w(BA).
\end{eqnarray}
Therefore, the desired first inequality follows from  \eqref{p10} together with the bound in Corollary \ref{cor2}. The desired second inequality follows from the fact that
\begin{eqnarray*}
&&	\left| \left \langle \begin{bmatrix}
		w(B_1A_1)& a_{12}& \ldots & a_{1n} \\
		0& w(B_2A_2)& \ldots& a_{2n} \\
		\vdots&\vdots&  & \vdots\\
		0& 0& \ldots& w(B_nA_n)\\
	\end{bmatrix}   \begin{bmatrix}
		x_1 \\
		x_2  \\
		\vdots\\
		x_n\\
	\end{bmatrix}, \begin{bmatrix}
		x_1 \\
		x_2  \\
		\vdots\\
		x_n\\
	\end{bmatrix} \right \rangle \right|\\
&&\leq \sum_{i=1}^n \left\{ w(B_iA_i)+ \frac12 \sum_{j=i+1}^na_{ij}  +\frac12 \sum_{j=1}^{i-1}a_{ji}\right\}|x_i|^2,
\end{eqnarray*}
for all $\begin{bmatrix}
	x_1 \\
	x_2  \\
	\vdots\\
	x_n\\
\end{bmatrix}\in \mathbb{C}^n$ with $\sum_{i=1}^n|x_i|^2=1.$
\end{proof}

\begin{remark}
Clearly,
 \begin{eqnarray*}
	 & & \max_{1\leq i\leq n } \left\{ w(B_iA_i)+ \frac12 \sum_{j=i+1}^na_{ij}  +\frac12 \sum_{j=1}^{i-1}a_{ji}\right\}\\
	&\leq & \max_{1\leq i\leq n } \left\{ w(B_iA_i)+ \frac12 \sum_{j=i+1}^n (\|B_iA_j\|+\|B_jA_i\|)  +\frac12 \sum_{j=1}^{i-1}(\|B_jA_i\|+\|B_iA_j\|)\right\}\\
	&=& \max_{1\leq i\leq n } \left\{ w(B_iA_i)+ \frac12 \sum_{\underset{j\neq i}{j=1}}^n (\|B_iA_j\|+\|B_jA_i\|) \right\}. 
	\end{eqnarray*} 
Therefore, the spectral radius bound in Theorem \ref{th3} refines the existing bound in \cite[Theorem 2.10]{Bhunia_LAA_2019}, which  is
$$ r\left(\sum_{i=1}^nA_iB_i\right) \leq \max_{1\leq i\leq n } \left\{ w(B_iA_i)+ \frac12 \sum_{\underset{j\neq i}{j=1}}^n (\|B_iA_j\|+\|B_jA_i\|) \right\}.$$
\end{remark}

\noindent {\bf{Declarations.}}
\noindent {\bf{Conflict of Interest.}} The author declare that there is no conflict of interest.

\bibliographystyle{amsplain}

\end{document}